\newtheorem{thm}{Theorem}[section]
\newtheorem{lem}[thm]{Lemma}
\newtheorem{rmk}[thm]{Remark}
\begin{document}


\title{On inscribed equilateral simplices in normed spaces}

\author[B. Gonz\'alez Merino]{Bernardo Gonz\'alez Merino}\email{bgonzalez4@us.es}

\thanks{2010 Mathematics Subject Classification. Primary 52A20; Secondary 52A38, 52A40.\\
This research is a result of the activity developed within the framework of the Programme in Support of Excellence Groups of the Regi\'on de Murcia, Spain, by Fundaci\'on S\'eneca, Science and Technology Agency of the Regi\'on de Murcia. Partially supported by Fundaci\'on S\'eneca project 19901/GERM/15, Spain, and by MINECO Project MTM2015-63699-P Spain.}

\date{\today}\maketitle

\begin{abstract}
In this paper we prove in certain n-dimensional normed spaces $X$ 
the existence of full-dimensional equilateral simplices
of large size inscribed to the unit ball $B$.
This extends the construction of Makeev \cite{Mak} in dimension 4 and we also compute
an example of a space in which the idea cannot be applied.
\end{abstract}

For any n-dimensional normed space $X$, its \emph{equilateral dimension} $e(X)$ is the maximum number of pairwise equidistant points. 
The study of this quantity goes back to the '50s, see Petty \cite{Pet} and the references therein.
Danzer and Gr\"unbaum \cite{DaGr} proved that $e(X)\leq 2^n$, with equality if and only if $X$ is isometric to $\ell^n_\infty$. For the lower bound,
it is believed that $e(X)\geq n+1$ (cf.~\cite{BMP,Pet}). In the planar case, this is a simple exercise. The cases of dimension 3 and 4 were proven
by Petty \cite{Pet} and Makeev \cite{Mak}, respectively. In higher dimensions, there are several results about $e(X)$.
For instance, if the Banach-Mazur distance from $X$ to $\ell^n_2$ is at most $1+1/n$ then Brass \cite{Br} and Dekster \cite{De}
proved that $e(X)\geq n+1$, and if its distance to $\ell^n_\infty$ is at most $2$, then Averkov \cite{Av} (see also \cite{SwVi}) proved that $e(X)\geq n+1$.
If the distance of $X$ to $\ell^n_p$ is small, then Swanepoel and Villa \cite{SwVi} proved that $e(X)\geq n$. Moreover,
it is known that $e(X)\rightarrow\infty$ as $n\rightarrow\infty$ (cf.~\cite{Br,De}), and this was quantified in \cite{SwVi} by $e(X)>e^{c\sqrt{\log(n)}}$, for some absolute constant $c>0$. Permutation invariant spaces (cf.~\cite{Ko}) or extremal spaces for Bohnenblust inequality (cf.~\cite[Cor.~2.9]{BrGo} and
\cite[Rmk.~4.1]{BrGo2}) also fulfill $e(X)\geq n+1$.

For a given sequence of subspaces $H_1\subset H_2\subset\cdots\subset H_{n-1}$ of dimensions $1,2,\dots,n-1$
we say that a 0-symmetric convex and compact set $B \subset\mathbb R^n$ has the $(H_1,\dots,H_{n-1})$ intersection property if $B\cap(x+H_i)$ is homothetic to $B\cap H_i$, for every
$x\in\mathbb R^n$. $\ell^n_p$ balls, generalized symmetric prisms or doubled cones, are examples of such sets.
After a suitable rotation, we can suppose that $H_i=\langle e_1,\dots,e_i\rangle$, $i=1,\dots,n-1$, where $e_i$ is the \emph{ith-canonical vector}.
If $S\subset\mathbb R^n$ is a simplex, we say that $S$ is \emph{inscribed} in $B$ if $S\subset B$ and the vertices of $S$ belong to
the \emph{boundary} of $B$, $\partial B$.
Furthermore, we say that $B$ has the $(H_1,\dots,H_{n-1})$-2-intersection property if it has the $(H_1,\dots,H_{n-1})$ intersection property
and if $S_{i-1}$ is an $(i-1)$-dimensional equilateral simplex with diameter $\mathrm{D}(S_{i-1})=1$ and is inscribed in $B\cap(te_i+H_{i-1})$, for some $t>0$, then $(B\cap H_i)\cap(2te_i+H_{i-1})=\emptyset$, for every $i=2,\dots,n$ (here we assume that $H_n=\mathbb R^n$).
When $B$ is smooth, this additional condition relates the curvature of $B$ and the diameter of a suitable $(n-1)$-dimensional equilateral simplex.

We also say that $X$ has the $(H_1,\dots,H_{n-1})$(-2-)intersection
property if its unit ball $B$ has it. Our main result in the paper is an affirmative answer to the conjecture on $e(X)\geq n+1$ for such spaces.
\begin{thm}\label{thm:mainResult}
Let $X$ be an n-dimensional normed space with unit ball $B$, and let $H_1\subset\dots\subset H_{n-1}$ be subspaces of dimensions $1,\dots,n-1$.
If $X$ has the $(H_1,\dots,H_{n-1})$-2-intersection property, then $e(X)\geq n+1$.
\end{thm}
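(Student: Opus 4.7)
The plan is to proceed by induction on $n$, using the intersection property to produce a one-parameter family of candidate $n$-simplices and the intermediate value theorem to select an inscribed one, with the 2-intersection property driving the needed sign change. The base case $n=1$ is immediate, since $\pm r_1 e_1 \in \partial B$ (with $r_1 = 1/\|e_1\|_X$) are two equidistant points. For the inductive step, I first observe that the $(H_1,\ldots,H_{n-1})$-intersection property implies $B$ is invariant under each reflection $e_j \mapsto -e_j$ (each $H_j$-parallel section is centred via homothety), and that the restriction of $X$ to $H_{n-1}$ inherits the $(H_1,\ldots,H_{n-2})$-2-intersection property. By the inductive hypothesis, there is an $(n-1)$-dimensional equilateral simplex $S_{n-1}$ inscribed in $B \cap H_{n-1}$ of some diameter $D_{n-1}$.

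For each $t \in (0, r_n)$, where $r_n e_n \in \partial B$, the $H_{n-1}$-intersection property yields $B \cap (t e_n + H_{n-1}) = t e_n + \alpha_n(t)(B \cap H_{n-1})$, so the scaled translate $\tilde S(t) := t e_n + \alpha_n(t) S_{n-1}$ is an $(n-1)$-dimensional equilateral simplex inscribed in this section with diameter $\alpha_n(t) D_{n-1}$. I would then seek an apex $v_*(t) = -s(t) e_n$ on the negative $e_n$-axis completing $\tilde S(t)$ to an $n$-dimensional equilateral simplex of diameter $\alpha_n(t) D_{n-1}$. The reflective symmetries of $B$, combined with an inductively maintained symmetric structure of $S_{n-1}$, should imply that the single equation $\|v_*(t) - v_1\|_X = \alpha_n(t) D_{n-1}$ for one vertex $v_1$ of $\tilde S(t)$ is equivalent to the equidistance of $v_*(t)$ from every vertex, so that $s(t)$ is determined continuously.

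To close, I would apply the intermediate value theorem to $\phi(t) := \|v_*(t)\|_X - 1$. At $t \to 0^+$, the vertex $v_1$ lies on $\partial B \cap H_{n-1}$, and the homothety formula shows that $v_1 + s e_n \notin B$ for $s > 0$; since generically $D_{n-1} > 1$, this forces $s(0) > r_n$, whence $\phi(0) > 0$. At the critical height $t_0$ with $\alpha_n(t_0) D_{n-1} = 1$, the 2-intersection property yields $B \cap H_n \cap (2 t_0 e_n + H_{n-1}) = \emptyset$; since $v_1 - v_*(t_0) \in H_n$ has $X$-norm $1$ and $e_n$-component $t_0 + s(t_0)$, this forces $t_0 + s(t_0) < 2 t_0$, so $s(t_0) < t_0 < r_n$ and $\phi(t_0) < 0$. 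The IVT then yields $t^* \in (0, t_0)$ with $v_*(t^*) \in \partial B$, making $\{v_*(t^*)\} \cup \tilde S(t^*)$ the desired inscribed $n$-dimensional equilateral simplex.

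The main obstacle is the equidistance claim in the middle step: ensuring that $v_*(t)$ on a single axis is automatically equidistant from all $n$ vertices of $\tilde S(t)$ requires strengthening the inductive hypothesis to carry along a symmetric structure of $S_{n-1}$ compatible with the coordinate reflections, so that one scalar condition implies all $n$; a priori the group $\mathbb{Z}_2^{n-1}$ generated by the reflections cannot act transitively on the $n$ vertices of an arbitrary simplex, so the inductive construction itself has to be carefully designed. A secondary delicate point is converting the emptiness of the section at $2 t_0 e_n + H_{n-1}$ into the bound $s(t_0) < t_0$, which depends on recognizing $v_1 - v_*(t_0)$ as a point of $\partial B \cap H_n$ at $e_n$-height $t_0 + s(t_0)$.
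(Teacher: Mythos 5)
There is a genuine gap, and you have correctly identified where it sits: the claim that an apex $v_*(t)$ on the $e_n$-axis is automatically equidistant from all $n$ vertices of $\tilde S(t)$. Your proposed fix via coordinate reflections fails twice over. First, the $(H_1,\dots,H_{n-1})$ intersection property does \emph{not} make $B$ invariant under $e_j\mapsto -e_j$: the sections $B\cap(te_n+H_{n-1})$ are homothets of $B\cap H_{n-1}$, but their centres need not lie on the $e_n$-axis (a sheared double cone such as $\conv\bigl((B^2_2\times\{0\})\cup\{\pm(1,0,1)\}\bigr)$ is $0$-symmetric and has the intersection property but admits no coordinate reflection symmetry). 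Second, as you note yourself, a group of order $2^{n-1}$ cannot act transitively on the $n$ vertices of a simplex, so no strengthening of the inductive hypothesis along these lines can make one scalar equation imply all $n$. The paper removes this obstacle entirely by a different choice of apex: it first arranges (by the intermediate value theorem in the height $t$) a section in which the inscribed facet simplex $T_{t^*}$ has diameter exactly $1$; since its vertices lie on $\partial B$ they all have norm $1$, so the \emph{origin} is automatically at distance $1$ from every vertex and $\conv(\{0\}\cup T_{t^*})$ is equilateral, with no equidistance equation to solve. One then slides and scales this rigid simplex with its facet inscribed in $B\cap(te_n+H_{n-1})$ for $t\in[-t^*,t^*]$: at $t=t^*$ the apex is $0\in\inter(B)$, while at $t=-t^*$ the apex sits at height $-2t^*$, which lies outside $B$ precisely by the $2$-intersection property; the intermediate value theorem then produces an inscribed position.

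Two further points in your sketch would also need repair. Your sign claim at $t\to 0^+$ (that $D_{n-1}>1$ forces $s(0)>r_n$, i.e.\ that the apex leaves $B$) does not follow: two points of $B$ can be at norm-distance up to $2$, so being at distance $D_{n-1}>1$ from the vertices does not push the apex outside $B$; the ``outside'' endpoint of the IVT has to come from the $2$-intersection property, as above. And ``generically $D_{n-1}>1$'' must be replaced by an actual strengthening of the induction: the paper proves that the inscribed equilateral $(n-1)$-simplex can always be taken with diameter \emph{strictly} greater than $1$ (this is the content of its Lemma, and the strictness is itself argued via a separate non-degeneracy step comparing two sections), which is exactly what guarantees that a section with facet-diameter equal to $1$ can be reached; one must also treat separately the case in which every nonempty section still carries a facet simplex of diameter greater than $1$.
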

The idea behind has some similarities with the construction of Makeev \cite{Mak} when proving $e(X)\geq 5$ in the 4-dimensional case. 
Indeed, Theorem \ref{thm:mainResult} shows norm spaces where Makeev's idea admits an inductive step in the dimension. Moreover,
we give examples in Remark \ref{rmk:NotInductive} of spaces where such induction cannot be done.

\section{Proof of the main result}

In our proof, we actually show that for spaces having the $(H_1,\dots,H_{n-1})$-2-intersection property,
something slightly stronger than Theorem \ref{thm:mainResult} can be proven. For any normed space $X$ endowed with a norm $\|\cdot\|$, the \emph{diameter}
of a set $A\subset X$ is given by
\[
\mathrm{D}(A)=\sup_{x,y\in A}\|x-y\|.
\]
For any $A\subset\mathbb R^n$, we let $\mathrm{int}(A)$ and $\mathrm{conv}(A)$ be the \emph{interior} and \emph{convex hull} of $A$, respectively.
We let $B^n_2$ be the n-dimensional Euclidean unit ball.

\begin{lem}\label{lem:mainResult}
Let $X$ be an n-dimensional normed space with unit ball $B$, and let $H_1\subset\dots\subset H_{n-1}$ be subspaces of dimensions $1,\dots,n-1$.
If $X$ has the $(H_1,\dots,H_{n-1})$-2-intersection property, then there exists an n-dimensional equilateral simplex $S$ inscribed in $B$ with
$\mathrm{D}(S)>1$.
\end{lem}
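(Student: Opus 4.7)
The plan is to prove by induction on $i=1,\dots,n$ that there exists an $i$-dimensional equilateral simplex $S_i$ inscribed in $B\cap H_i$ with $\mathrm{D}(S_i)>1$; the lemma is the case $i=n$. The base case $i=1$ is immediate by taking $S_1=\{v,-v\}$ for any $v\in\partial B\cap H_1$, which has diameter $2$.

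For the inductive step, suppose $S_{i-1}$ is inscribed in $B\cap H_{i-1}$ with $D:=\mathrm{D}(S_{i-1})>1$. The intersection property writes each section as $B\cap(te_i+H_{i-1})=\lambda(t)(B\cap H_{i-1})+c(t)$; convexity and $0$-symmetry of $B$ force $c$ to be linear in $t$ with $c(0)=0$, so after replacing $e_i$ by a vector in $H_i\setminus H_{i-1}$ along the line of homothety centers we may assume $c(t)=te_i$. Here $\lambda$ is an even continuous function with $\lambda(0)=1$, vanishing at $t=\pm t_*$ for some $t_*>0$. The scaled copies $\widetilde S_{i-1}(t):=\lambda(t)S_{i-1}+te_i$ are inscribed in their sections with diameter $\lambda(t)D$; let $t^*\in(0,t_*)$ be given by $\lambda(t^*)=1/D$, so $\widetilde S_{i-1}(t^*)$ has diameter $1$. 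Applying the $2$-intersection property to this simplex yields the strict bound $t_*<2t^*$.

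The key geometric fact is that every point $se_i$ on the $e_i$-axis is equidistant in the norm of $X$ from all vertices $w_j(t)=te_i+\lambda(t)w_j^{(0)}$ of $\widetilde S_{i-1}(t)$: each $w_j^{(0)}\in\partial B\cap H_{i-1}$ has $X$-norm $1$, and by the intersection property $\|ze_i+u\|_X=\mu$ if and only if $\mu\,\lambda(z/\mu)=\|u\|_X$, a relation depending on $u\in H_{i-1}$ only through its norm. I take the apex to be one of the two axis tips $v_\pm:=\pm t_*e_i\in\partial B\cap H_i$ and impose that $\widetilde S_{i-1}(t)\cup\{v_\pm\}$ be equilateral of side $\lambda(t)D$; after substitution this reduces to
\[
\lambda(t)=\frac{t_*\pm t}{t^*D}.
\]
IVT on $[0,t^*]$ produces the required height $t$ in one of the two sign choices: the ``$-$'' equation has a solution when $D\ge t_*/t^*$ (using $\lambda(0)=1\ge t_*/(t^*D)$), while the ``$+$'' equation has a solution when $D\le t_*/t^*$, the critical point being that the strict bound $t_*<2t^*$ forces $\lambda(t^*)=1/D>(t_*-t^*)/(t^*D)$ at the right endpoint. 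The resulting diameter $\mathrm{D}(S_i)=(t_*\pm t)/t^*$ exceeds $1$ in the ``$-$'' case because $t_*>t^*$ (as $\lambda$ is decreasing with $\lambda(t^*)=1/D<1$) and $t>0$; in the ``$+$'' case, one checks that the IVT solution actually lies in $(0,t_*-t^*)$, again using the strict $2$-intersection bound.

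The main obstacle is recognizing that the axis-tip apex construction must split into two subcases according to the sign of $D-t_*/t^*$; once this dichotomy is observed, the strict $2$-intersection bound $t_*<2t^*$ supplies exactly the geometric slack needed to make the intermediate value argument succeed in each.
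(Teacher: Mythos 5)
Your construction rests on two reductions that are asserted but not justified, and both are false in general. First, you claim that convexity and $0$-symmetry force the homothety centres $c(t)$ of the sections $B\cap(te_i+H_{i-1})$ to lie on a line, so that after renaming $e_i$ every point of the $e_i$-axis is norm-equidistant from all points of the relative boundary of each section. This fails: stacking the discs of radius $1-t^2$ centred at $(\varepsilon t^3,0)$ at height $t\in[-1,1]$ gives, for small $\varepsilon>0$, a $0$-symmetric convex body whose horizontal sections are homothetic discs (and whose sections by lines are segments, hence trivially homothetic), yet whose centres are not collinear; symmetry only forces $c$ to be odd. Without a straight line of centres there is no point equidistant from all vertices of $\widetilde S_{i-1}(t)$, and the apex-at-the-tip scheme collapses. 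Second, you assume $\lambda(\pm t_*)=0$; this fails for prisms, which are among the paper's motivating examples, and then $t_*e_i$ lies in the relative interior of a facet and your formula $\|ze_i+u\|=\mu\iff\mu\lambda(z/\mu)=\|u\|$ is no longer an equivalence. The paper avoids both issues by taking the provisional apex at the \emph{origin}: the origin is at distance exactly $1$ from each vertex of the diameter-$1$ facet simply because those vertices lie on $\partial B$, so no axis is needed; the $2$-intersection property then shows that the congruent copy whose facet is inscribed at height $-t^*$ (and whose apex therefore sits at distance $2t^*$ from $H_{n-1}$) has its apex outside $B$, and continuity between the two positions yields an inscribed homothet that is strictly larger than the diameter-$1$ one.

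Even granting your setup, the intermediate value step is wrong as written: the two cases are interchanged. Writing $g_\mp(t)=\lambda(t)-(t_*\mp t)/(t^*D)$, one has $g_-(0)=1-t_*/(t^*D)$ and $g_-(t^*)=(2t^*-t_*)/(t^*D)>0$; under your stated hypothesis $D\ge t_*/t^*$ both endpoint values are nonnegative and the IVT produces nothing. A root of the ``$-$'' equation in $[0,t^*]$ is guaranteed precisely when $D\le t_*/t^*$, and symmetrically $g_+(t^*)=-t_*/(t^*D)<0$, so the ``$+$'' equation is the one that needs $D\ge t_*/t^*$. After reversing the dichotomy the computation can be salvaged, but you must still verify in the ``$-$'' case that the root lies in $(0,t_*-t^*)$ so that the resulting diameter $(t_*-t)/t^*$ exceeds $1$; this can degenerate to diameter exactly $1$ when $\lambda$ is constant on $[t_*-t^*,t^*]$, another case your argument does not address.
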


The additional condition that $\mathrm{D}(S)>1$ turns out to be \emph{crucial} in the proof. In fact, this condition is the \emph{hidden ingredient}
in the proof of Makeev \cite{Mak} of the 4-dimensional case, where he made use of the control of the diameter of a corresponding 3-dimensional simplex.
Let us also observe that $S\subset B$ already implies that $\mathrm{D}(S)\leq 2$.

Lemma \ref{lem:mainResult} also ensures that we find a large equilateral simplex inscribed in $B$. The existence of inscribed simplices is at the kernel
of the problem of generalizing a bit more this construction, and it has been largely studied (cf.~\cite{Ko2,Mak2}). Let us briefly remark how to construct
an n-dimensional equilateral simplex from a very particular (n-1)-dimensional equilateral simplex.
\begin{rmk}\label{rmk:n+1}
Let $X$ be an n-dimensional normed space with smooth and strictly convex unit ball $B$, and let $H=\langle e_1,\dots,e_{n-1}\rangle$. If
there exists an (n-1)-dimensional equilateral simplex $T$ inscribed in $B\cap H$ with $\mathrm{D}(T)>1$, then $e(X)\geq n+1$.

The sketch of the proof is as follows. We let
$\varphi(t)$ be the diameter of the maximum dilatation of $T$ which can be inscribed in $B\cap(te_n+H)$, for $t\geq 0$, and we will denote by $T_t$
one of those simplices. The existence of $T_t$ is well-known in the smooth strictly convex case (cf.~\cite{KrNe}), but not the uniqueness (this is why we choose the maximum). Of course, $\varphi(t)$ is continuous, and
if $B\cap(te_n+H)\neq\emptyset$ for $t\in[0,t_0]$, then the strict convexity of $B$ implies that $\varphi(t_0)=0$. Thus there exists $t^*\in(0,t_0)$
such that $\varphi(t^*)=1$. Hence, the simplex $S=\mathrm{conv}(\{0\}\cup T_{t^*})$ is an n-dimensional equilateral simplex.
\end{rmk}

\begin{proof}[Proof of Lemma \ref{lem:mainResult}]
We prove it by induction in the dimension $n\geq 2$. Let us start observing that after a suitable rotation, we can suppose that
$H_i=\langle e_1,\dots,e_i\rangle$, for $i=1,\dots,n-1$.

We start with the planar case $n=2$. Since $B$ is a planar set, let us suppose after a rescalation that $\pm e_1\in\partial B$, and
let $M_0=[-e_1,e_1]$.
Clearly $\mathrm{D}(M_0)=2$. Let $M_t=B\cap(te_2+H_1)\neq\emptyset$ for $t\in[0,t_0]$. Since $B$
is 0-symmetric, then $\mathrm{D}(M_0)\geq\mathrm{D}(M_t)$ for every $t\in[0,t_0]$. Moreover, since $B$ is convex, then $\varphi(t)=\mathrm{D}(M_t)$
decreases continuously on $t\in[0,t_0]$. Hence, we may have two cases: either $\varphi(t_0)\leq 1$, or $\varphi(t_0)>1$.

In the first case, we know that since $\varphi(t)$ is continuous, then there exists $t^*\in[0,t_0]$ such that $\varphi(t^*)=1$, i.e., if $M_{t^*}=[x_1,x_2]$, that $\mathrm{D}(M_{t^*})=\|x_1-x_2\|=1$. Since $x_1,x_2\in\partial B$, then $\|x_i\|=1$, $i=1,2$, and thus $S=\mathrm{conv}(\{0,x_1,x_2\})$ gives an equilateral triangle $S\subset B$ with $\mathrm{D}(S)=1$. Moreover, since $(2t^*e_2+H_1)\cap B=\emptyset$, then it is clear that the equilateral triangle
$T_{t^*}=\mathrm{conv}(\{x_1,x_2,x_1+x_2\})$ (which is the reflexion of $\mathrm{conv}(\{0,x_1,x_2\})$ w.r.t.~$(x_1+x_2)/2$) has the vertex $x_1+x_2\notin B$, and that $T_{-t^*}=\mathrm{conv}(\{0,-x_1,-x_2\})$ has its vertex $0\in\mathrm{int}(B)$. If we consider in general $T_t$ to be the corresponding
homothetic triangle of $T_{t^*}$ with horizontal edge inscribed in the section $(te_2+H_1)\cap B$, then by continuity there exists $t'\in(-t^*,t^*)$ such that
the last vertex of $T_{t'}$ belongs to $\partial B$, and thus, such that $T_{t'}$ is inscribed in $B$. Moreover, we observe that
now $\mathrm{D}(T_{t'})>1$; otherwise, if $\mathrm{D}(T_{t'})=1$, then we find that two sections of $B$, $B\cap (t^*e_2+H_1)$ and $B\cap (t'e_2+H_1)$
(whose lengths measured w.r.t.~the norm are 1) are exactly the same, which by the 0-symmetry and convexity of $B$  directly implies that $B\cap H_1$ is also a copy of those sections, and thus, a contradiction (since its length w.r.t.~the norm is 2).

In the second case, since $M_{t_0}\subset\partial B$ with $\mathrm{D}(M_{t_0})>1$, then we pick a line segment $[x_1,x_2]\subset \mathrm{relint}(M_{t_0})$ with $\|x_1-x_2\|=1$. Once more, if $S=\mathrm{conv}(\{0,x_1,x_2\})$, since $\|x_i\|=1$ $i=1,2$, $S$ is an equilateral triangle of diameter 1.
Now we consider bigger homothetic copies of $S$, with an edge contained in $B\cap(t_0e_2+H_1)$. By continuity, either at some point the last vertex belongs to $\partial B$ too (and in that case, that would be the desired equilateral triangle) or we would end up with a homothetic copy
of $S$ with an edge being $B\cap(t_0e_1+H_1)$ and the opposing vertex still belonging to $\mathrm{int}(B)$. Then, we would go on considering
bigger homothetic copies of that triangle, with an edge being equal to $B\cap(te_2+H_1)$, for $t\in[0,t_0]$. It is clear that at some point before arriving at $t=0$ we get such a homothetic triangle $S^*$ of $S$, with its last vertex in $\partial B$, hence being an equilateral triangle inscribed in $B$, with $\mathrm{D}(S^*)>1$. This concludes the case of $n=2$.

Now we prove the general case, assuming the induction hypothesis on lower dimensional spaces.
The idea follows essentially the same steps as the proof of the planar case.
Let us consider $B\cap H_{n-1}$,
which is a $0$-symmetric convex and compact set, with the $(H_1,\dots,H_{n-2})$-2-intersection property.
Moreover, $B\cap H_{n-1}$ induces in $H_{n-1}$ the same norm than the one in $X$. Hence,
the induction hypothesis implies the existence of $x_1,\dots,x_{n}\subset B\cap H_{n-1}$ such that
$T_0$ is an equilateral (n-1)-simplex with
$\mathrm{D}(T_0)>1$ inscribed in $B\cap H_{n-1}$.
Let $M_t=B\cap(te_n+H_{n-1})\neq\emptyset$ for $t\in[0,t_0]$.
Moreover, since $M_t$ is a homothety of $M_0$ let $T_t$ be the corresponding homothety of $T_0$ contained in $M_t$, and whose homothetic ratio with $T_0$ is the same than the one between $M_t$ and $M_0$.
Since $B$ is 0-symmetric and convex, then $\varphi(t)=\mathrm{D}(T_t)$ is continuous and non-increasing on $t\in[0,t_0]$.
Then, we either have that $\varphi(t_0)\leq 1$ or $\varphi(t_0)>1$.

In the first case, since $\varphi(t)$ is continuous and $\varphi(0)>1\geq\varphi(t_0)$, then there exists $t^*\in[0,t_0]$ such that $\varphi(t^*)=1$, i.e.,
$\mathrm{D}(T_{t^*})=1$. If $T_{t^*}=\mathrm{conv}(\{x_1,\dots,x_n\})$, observe that $\|x_i-x_j\|=1$ for every $1\leq i<j\leq n$. Moreover,
since $x_i\in\partial B$,  then $\|x_i\|=1$ for $i=1,\dots,n$, thus $S_{t^*}=\mathrm{conv}(\{0,x_1,\dots,x_n\})$ is an equilateral simplex of diameter 1.
We now consider the homothetic copies $S_t$ of $S_{t^*}$ such that they have the facet parallel to $H_{n-1}$ inscribed in $B\cap(te_n+H_{n-1})$, for every
$t\in[-t^*,t^*]$. By hypothesis, the vertex of $S_{-t^*}$ opposing the facet contained in $-t^*e_n+H_{n-1}$ must be outside $B$ (remember that the distance
from this vertex to $H_{n-1}$ is exactly $2t^*$). Thus, by continuity there exists $t'\in(-t^*,t^*)$ such that the last vertex of $S_{t'}$
belongs to $\partial B$ and thus $S_{t'}$ is inscribed in $B$. Moreover, the homothetic ratio of $S_{t'}$ w.r.t.~$S_{t^*}$ is strictly
greater than 1; otherwise, having two equal sections $B\cap (t^*e_n+H_{n-1})$ and $B\cap (t'e_n+H_{n-1})$,
and since $B$ is 0-symmetric and convex, would directly imply that $B\cap H_{n-1}$ is also a copy of the same size, and thus,
a contradiction with the fact that $\mathrm{D}(T_0)>1$ but $\mathrm{D}(T_{t^*})=1$. Thus $\mathrm{D}(S_{t'})>1$, as desired.

In the second case, since $M_{t_0}\subset\partial B$ with $\mathrm{D}(T_{t_0})>1$, then we select a smaller homothety of $T_{t_0}$ inside $B\cap(t_0e_n+H_{n-1})$, with homothety factor $\rho_0<1$ and called $T_{\rho_0}$, such that $\mathrm{D}(T_{\rho_0})=1$.
If $T_{\rho_0}=\mathrm{conv}(\{x_1,\dots,x_n\})$, then $\|x_i-x_j\|=1$ for every $1\leq i<j\leq n$, and since $x_i\in\partial B$, then
$\|x_i\|=1$ for $i=1,\dots,n$, and thus $S^*=\mathrm{conv}(\{0,x_1,\dots,x_n\})$ is an equilateral simplex.
As we did in the planar case, we now consider bigger homothetic copies of $S^*$ until reaching one of them inscribed in $B$. This can be done as before,
in two steps. First, the homothetic copies of $S^*$ will have their facet parallel to $H_{n-1}$ contained in $t_0e_n+H_{n-1}$.
In this first step, either the opposing vertex touches at some point $\partial B$, and hence that is the desired inscribed simplex, or the vertex still belongs to
$\mathrm{int}(B)$. In such case, we would in a second step consider bigger homothetic copies of $S^*$, so that the facet parallel to $H_{n-1}$
keeps being inscribed in $B\cap(te_n+H_{n-1})$, for $t\in[0,t_0]$. Since clearly the last homothetic copy with its facet inscribed in $B\cap H_{n-1}$
would have its last vertex outside $B$, there must exist some $t\in(0,t_0]$ such that the corresponding simplex has its last vertex contained in $\partial B$, and thus it is the desired equilateral simplex inscribed in $B$, with diameter larger than 1.
\end{proof}

\begin{rmk}\label{rmk:NotInductive}
If we replace in Lemma \ref{lem:mainResult} the condition $(H_1,\dots,H_{n-1})$-2-intersection property by
$(H_1,\dots,H_{n-1})$ intersection property, then the proof idea would fail if $n\geq 3$ (but would still be fine if $n=2$).
To see this, we show it when $n=3$ and the construction would be analogous in higher dimensions.
In particular, the space consider below gives an example of an equilateral simplex which \emph{cannot} be
inscribed in the unit ball.

Let $B=\mathrm{conv}((B^2_2\times\{0\})\cup\{(0,0,\pm 1)\})$. If $H_i=\langle e_i\rangle$, $i=1,2$,
then $B$ has the $(H_1,H_2)$ intersection property, \emph{but not} the $(H_1,H_2)$-2-intersection property.
The triangle $T=\mathrm{conv}(\{(\pm\sqrt{3}/2,-1/2,0),(0,1,0)\})$ is equilateral, inscribed in $B$, and has $\mathrm{D}(T)=\sqrt{3}$.
It is quite easy to check that the triangle
\[
T^*=\frac{1}{\sqrt{3}}T+\left(0,0,\frac{\sqrt{3}-1}{\sqrt{3}}\right)
\]
is also inscribed in $B$ with $\mathrm{D}(T^*)=1$. Thus the tetrahedron $S=\mathrm{conv}(\{0\}\cup T^*)$ is equilateral.
However, there exists no translation and homothety of $S$ inscribed in $B$, since the angle
of the edges of $S$ touching $0$ with the vertical line equals $\arctan\left(\frac{1}{\sqrt{3}-1}\right)\approx 0.938$
is bigger than the angle of the generator segment of the double cone $[(1,0,0),(0,0,1)]$ with the vertical line, which is $\pi/4\approx 0.785$.

Furthermore, let us consider $B^*$ smooth and strictly convex as close as we want from $B$, namely, $B^*=B+\varepsilon B^n_2$, with small $\varepsilon>0$.
We can then find a tetrahedron $S^*$ close to $S$, also with a horizontal facet being an equilateral triangle.
It is not difficult to check that a homothetic copy of $S^*$ inscribed in $B^*$ must touch the point 
$(0,0,-1)+\varepsilon(0,0,-1)$ and is arbitrarily small if $\varepsilon$ is arbitrarily small too.
This shows that Makeev's construction (see also Remark \ref{rmk:n+1}) cannot give an \emph{induction step} in the dimension,
since in this case gives a set with $\mathrm{D}(S^*)<1$, and thus we could not guarantee the existence of an (n-1)-simplex
$T$ inscribed in $B^*$ with $\mathrm{D}(T)=1$.
\end{rmk}

\emph{Acknowledgements:} I would like to thank Rafael Villa for his helpful comments and some fruitful discussions.

\end{document}